\newtheorem{theoremA}{Theorem}
\renewcommand{\thetheoremName}
\newtheorem{proposition[[]]}[theoremName]{Proposition G}
\newtheorem{theorem}{Theorem}[section]
\newtheorem{lemma}[theorem]{Lemma}
\newtheorem{proposition}[theorem]{Proposition}
\theoremstyle{definition}
\newtheorem{definition}[theorem]{Definition}
\newtheorem{remark}{Remark}
\numberwithin{equation}{section}
\newcommand{\D}{\operatorname{D}}
\newcommand{\dist}{\operatorname{dist}}
\newcommand{\Vol}{\operatorname{Vol}}
\newcommand{\Sup}{\operatorname{Sup}}
\newcommand{\kan}{\mathbb{K}^{n}(b)}
\newcommand{\erre}{\mathbb{R}}
\newcommand{\Han}{\mathbb{H}^n(b)}
\newcommand{\gr}{\operatorname{\nabla}}
\begin{document}

\title[Extrinsic isoperimetry and compactification]
{Extrinsic isoperimetry and compactification of minimal surfaces in Euclidean and Hyperbolic spaces}
\author[V. Gimeno]{Vicent Gimeno$^{\#}$}
\address{Departament de Matem\`{a}tiques-INIT, Universitat Jaume I, Castell\'o,
Spain.}
\email{gimenov@guest.uji.es}

\author[V. Palmer]{Vicente Palmer*}
\address{Departament de Matem\`{a}tiques-INIT, Universitat Jaume I, Castell\'o,
Spain.} \email{palmer@mat.uji.es}
\thanks{$^{\#}$ Supported by the
Fundaci\'o Caixa Castell\'o-Bancaixa Grants P1.1B2006-34 and P1.1B2009-14\\
\indent * Supported by MICINN grant No. MTM2010-21206-C02-02.}

\subjclass[2000]{Primary 53C20 ; Secondary 53C42, 49Q05}

\keywords{Area growth, minimal surfaces, Chern-Osserman inequality, finite topological type, compactification, Euler characteristic.}

\begin{abstract}
We study the topology of (properly) immersed complete minimal surfaces $P^2$ in Hyperbolic and Euclidean spaces  which have finite total extrinsic curvature, using  some isoperimetric inequalities satisfied by the extrinsic balls in these surfaces, (see \cite{Pa}). We present an alternative and  partially unified proof of the Chern-Osserman inequality satisfied by these minimal surfaces, (in $\erre^n$ and in $\Han$), based in the isoperimetric analysis above alluded.  Finally, we show a Chern-Osserman type equality attained by complete minimal surfaces in the Hyperbolic space with finite total extrinsic curvature.
\end{abstract}

\maketitle

\section{Introduction}\label{secIntro}
Let us consider $P^2$ be a complete and minimal surface immersed in $\erre^n$ and with finite total curvature $\int_P K^Pd\sigma <\infty$, being $K^P$ the Gauss curvature of the surface. Then we have the following equality (resp. inequality), known as the {\em Chern-Osserman formula}, (see \cite{A1}, \cite{ChOss} and \cite{JM}):

\begin{equation}\label{ChernOssEq}
-\chi(P)= \frac{1}{4\pi}\int_P\Vert B^P\Vert^2 d\sigma -\Sup_{r}\frac{\Vol(P^2\cap B^{0,n}_r)}{\Vol(B^{0,2}_r)}\leq \frac{1}{4\pi}\int_P\Vert B^P\Vert^2 d\sigma -k(P)
\end{equation}
where  $\chi(P)$  is the Euler characterisitic of $P$, $k$ is its number of ends, $B^P$ is the second fundamental foorm of $P$ in $\erre^n$ and $B^{b,n}_r$ denotes the geodesic $r$-ball in the simply connected real space form $\kan$.

To have finite total scalar (extrinsic) curvature $\int_P \Vert B^P\Vert^2 d\sigma <\infty$ is equivalent  to the finiteness of the total Gaussian curvature (the original assumption in \cite{ChOss}) when the surface is minimal and immersed in $\erre^n$.  From this point of view, it is natural to wonder if it is possible to stablish a Chern-Osserman inequality (or equality) for complete minimal surfaces with finite total extrinsic curvature (properly) immersed in the hyperbolic space. This question has been addressed by Q. Chen and Y. Cheng in the papers \cite{Ch2} and \cite{Che3}. They proved, for a complete minimal surface $P^2$ (properly) immersed in $\Han$ and such that $\int_P \Vert B^P\Vert d\sigma <\infty$, that $  \Sup_{r}\frac{\Vol(P^2\cap B^{-1,n}_r)}{\Vol(B^{-1,2}_r)}< \infty$ and the following version of the Chern-Osserman Inequality, in terms of the volume growth of the extrinsic balls:
\begin{equation}\label{ChernOssHyp}
 -\chi(P)\leq \frac{1}{4\pi}\int_P\Vert B^P\Vert^2 d\sigma -\Sup_{r}\frac{\Vol(P^2\cap B^{-1,n}_r)}{\Vol(B^{-1,2}_r)}
\end{equation}

The proofs given by these authors are different for those for the Euclidean case, and rely heavily on the properties of the hyperbolic functions.

We present in this paper a partial unification of the proof of the Chern-Osserman inequality (in terms of the volume growth) for complete minimal surfaces with finite total extrinsic curvature immersed in Euclidean or Hyperbolic spaces. This partial unification is based in obtaining estimates for the Euler characteristic of the extrinsic balls (given in Lemma \ref{gradient}, and Proposition \ref{Mainth}) and in the isoperimetric inequality for the extrinsic balls given in Theorem 1.1 in \cite{Pa}. These results are based, in its turn, on the divergence Theorem and the Hessian and Laplacian comparison theory of restricted distance function, (see \cite{GW}, \cite{HP} and \cite{Pa3}) which involves bounds on the mean curvature of the submanifold.

We have proved the following Chern-Osserman inequality, which encompasses inequalities (\ref{ChernOssEq}) and (\ref{ChernOssHyp}):

\begin{theoremA}\label{ChernOss1} Let $P^2$ be an complete minimal surface immersed in a simply connected real space form with constant sectional curvature $b \leq 0$, $\kan$. Let us suppose that $\int_P\Vert B^P \Vert^2 d\sigma<\infty$. Then

\begin{enumerate}
\item $P$ has finite topological type.
\item $\Sup_{t>0}(\frac{\Vol(D_t)}{\Vol(B^{b,2}_t)})<\infty$
\item $-\chi(P)\leq \frac{\int_P \Vert B^P \Vert ^2}{4\pi}-\Sup_{t>0}\frac{\Vol(D_t)}{\Vol(B_t^{b,2})}$
\end{enumerate}
where $\chi(P)$ is the Euler characteristic of $P$. 
\end{theoremA}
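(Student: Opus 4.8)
The plan is to reduce everything to the Gauss--Bonnet theorem applied to the extrinsic balls $D_t=P^2\cap B^{b,n}_t$, using minimality to convert the Gauss equation into the pointwise identity $K^P=b-\tfrac12\Vert B^P\Vert^2$. Integrating this over $D_t$ and substituting into Gauss--Bonnet gives the single working identity
\begin{equation*}
\frac{1}{4\pi}\int_{D_t}\Vert B^P\Vert^2\,d\sigma=-\chi(D_t)+\frac{b\,\Vol(D_t)}{2\pi}+\frac{1}{2\pi}\int_{\partial D_t}k_g\,ds,
\end{equation*}
where $k_g$ is the geodesic curvature of the level set $\partial D_t=\{r=t\}$ of the restricted extrinsic distance $r$. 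Everything then hinges on letting $t\to\infty$ in this identity and controlling each of the three terms.

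First I would settle (1) and (2). For finite topological type I would combine Proposition \ref{Mainth} with the gradient estimate of Lemma \ref{gradient}: the restricted distance $r|_P$ can only have critical points where $|\gr^P r|$ degenerates, and the finiteness of $\int_P\Vert B^P\Vert^2\,d\sigma$ forces $|\gr^P r|\to1$ along the ends and confines the critical set to a compact region; hence $r|_P$ is proper with finitely many critical values, $\chi(D_t)$ stabilizes, and $\chi(D_t)\to\chi(P)$. For (2) I would use the minimality equality $\Delta^P r=\eta_b(r)\,(2-|\gr^P r|^2)$, with $\eta_b=f_b'/f_b$, $f_b(s)=\sinh(\sqrt{-b}\,s)/\sqrt{-b}$ for $b<0$ and $f_b(s)=s$ for $b=0$, together with the divergence theorem to obtain $\int_{\partial D_t}|\gr^P r|\,ds=\int_{D_t}\eta_b(r)(2-|\gr^P r|^2)\,d\sigma$. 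Feeding this into the isoperimetric inequality of Theorem 1.1 in \cite{Pa}, and using $0\le 1-|\gr^P r|^2$ (whose integral against $\eta_b(r)$ is finite by the curvature hypothesis), bounds $\Vol(D_t)/\Vol(B^{b,2}_t)$ uniformly and, by the attendant monotonicity formula, shows this ratio is non-decreasing in $t$, so that the supremum in the statement is realized as the limit $t\to\infty$.

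The heart of the matter is the limit of the last two terms. The clean algebraic fact behind the whole formula is that $f_b''=-b\,f_b$ with $f_b(0)=0,\ f_b'(0)=1$ forces $b\,F_b(t)+f_b'(t)=1$ for all $t$, where $F_b(t)=\int_0^t f_b$ and $\Vol(B^{b,2}_t)=2\pi F_b(t)$. On a model end of multiplicity $d$ one has $\partial D_t$ close to $d$ geodesic circles of radius $t$, so $k_g\approx\eta_b(t)$, $\Vol(\partial D_t)\approx 2\pi d\,f_b(t)$ and $\Vol(D_t)\approx 2\pi d\,F_b(t)$; substituting these into $\tfrac{b}{2\pi}\Vol(D_t)+\tfrac{1}{2\pi}\int_{\partial D_t}k_g\,ds$ and invoking $b\,F_b+f_b'=1$ collapses the expression exactly to $d=\lim_t\Vol(D_t)/\Vol(B^{b,2}_t)$. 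Making this rigorous is where $|\gr^P r|\le1$ and $|\gr^P r|\to1$ enter: they turn the model heuristic into the one-sided estimate
\begin{equation*}
\lim_{t\to\infty}\left(\frac{b\,\Vol(D_t)}{2\pi}+\frac{1}{2\pi}\int_{\partial D_t}k_g\,ds\right)\ \ge\ \lim_{t\to\infty}\frac{\Vol(D_t)}{\Vol(B^{b,2}_t)}.
\end{equation*}
Passing to the limit in the working identity then yields
\begin{equation*}
\frac{1}{4\pi}\int_{P}\Vert B^P\Vert^2\,d\sigma=-\chi(P)+\lim_{t\to\infty}\left(\frac{b\,\Vol(D_t)}{2\pi}+\frac{1}{2\pi}\int_{\partial D_t}k_g\,ds\right)\ \ge\ -\chi(P)+\Sup_{t>0}\frac{\Vol(D_t)}{\Vol(B^{b,2}_t)},
\end{equation*}
which rearranges to (3).

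The main obstacle I anticipate is controlling $\int_{\partial D_t}k_g\,ds$ along a genuine, non-model end. One must show that the boundary level sets become asymptotically round at exactly the rate dictated by $\eta_b$, and that the term $\Hess^P r(\gr^P r,\gr^P r)/|\gr^P r|^2$ appearing in the expansion of $k_g$ is negligible in the limit. Both reduce to quantitative decay of the second fundamental form along the ends, which is precisely what $\int_P\Vert B^P\Vert^2\,d\sigma<\infty$ supplies; the care needed to keep the residual error one-sided is what produces the inequality, rather than an equality, in (3).
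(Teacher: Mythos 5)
Your skeleton is the same as the paper's: Gauss--Bonnet on the extrinsic balls $D_t$, the Gauss equation $K^P=b-\tfrac12\Vert B^P\Vert^2$ for minimal $P$, the formula $k_g^{\partial D_t}=\frac{h_b(t)}{\Vert\gr^P r\Vert}+\langle B^P(e,e),\frac{\gr^\perp r}{\Vert\gr^P r\Vert}\rangle$ for the level sets, and the isoperimetric inequality of Theorem 1.1 in \cite{Pa}. But the three places where you wave your hands are exactly the places where the paper has to work, and as written each one is a genuine gap. First, for (1) you claim that finiteness of $\int_P\Vert B^P\Vert^2$ ``forces $\vert\gr^P r\vert\to1$ along the ends'' and that this follows from Lemma \ref{gradient} and Proposition \ref{Mainth}. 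Neither result gives pointwise information; the absence of critical points of $r$ outside a compact set is obtained in the paper from the \emph{uniform pointwise} decay $\Vert B^P\Vert\to0$, which is not a consequence of the $L^2$ bound alone but of the external regularity results of Anderson (Proposition 2.2 of \cite{A1}) in $\erre^n$ and De Oliveira (Lemma 3.1 of \cite{O}) in $\Han$; without citing something of that strength your compactification argument does not close. Second, for (2) you assert that $\int_{D_t}h_b(r)\bigl(1-\vert\gr^P r\vert^2\bigr)\,d\sigma$ is uniformly bounded ``by the curvature hypothesis.'' There is no pointwise or integral relation between $\Vert\gr^\perp r\Vert^2$ and $\Vert B^P\Vert^2$ that yields this; in the paper the finiteness of $\Sup_t\Vol(D_t)/\Vol(B_t^{b,2})$ is not a separate lemma but falls out of the same limiting inequality that gives (3), so you cannot quote it as an input.

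Third, and most seriously, the one-sided estimate $\lim_t\bigl(\tfrac{b}{2\pi}\Vol(D_t)+\tfrac{1}{2\pi}\int_{\partial D_t}k_g\bigr)\ \ge\ \lim_t\Vol(D_t)/\Vol(B_t^{b,2})$ is precisely the theorem, and the model-end heuristic does not prove it. The troublesome term is $\int_{\partial D_t}\langle B^P(e,e),\frac{\gr^\perp r}{\Vert\gr^P r\Vert}\rangle\,d\sigma_t$: in the hyperbolic case $\Vol(\partial D_t)$ grows like $e^{\sqrt{-b}\,t}$, so even uniform pointwise decay of $\Vert B^P\Vert$ does not make this boundary integral small, contrary to your closing remark. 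The paper's device is to split this term by a weighted arithmetic--geometric mean inequality with weight $f_{b,\alpha}^2=\alpha h_b$, absorb the $\Vert\gr^\perp r\Vert^2$ part via Lemma \ref{gradient} and the isoperimetric inequality, and control the remaining $\frac{1}{2f_{b,\alpha}^2(t)}R'(t)$ by observing that $\int_0^\infty R'(t)/h_b(t)\,dt<\infty$, which yields a subsequence $t_i\to\infty$ with $R'(t_i)/h_b(t_i)\to0$; one then passes to the limit along that subsequence and finally lets $\alpha\to0$. (In the Euclidean case the analogous absorption uses Anderson's pointwise bound $\Vert B^P\Vert^2\le\mu(t)/t^2$.) Your proposal contains no substitute for the parameter $\alpha$, the subsequence extraction, or the integrated control of $R'$, so the inequality in (3) is not actually established.
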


  Although with this approach we are not able to state equality (\ref{ChernOssEq}) in the Euclidean setting, we shall prove in Theorem \ref{ChernEq} the following Chern-Osserman type equality for cmi surfaces in the Hyperbolic space:

\begin{theoremA} \label{ChernEq}
Let $P^2$ be  a complete immersed minimal surface in $\Han$. Let us suppose that $\int_P \Vert B^P\Vert^2 d\sigma < \infty$. Then
\begin{equation}\label{Equality}
-\chi(P)=\frac{1}{4\pi}\int_P \Vert B^P \Vert ^ 2 d\sigma-\Sup_{t>0}\frac{\Vol(D_t)}{\Vol(B_t^{b,2})}-\frac{1}{2\pi}G_b(P)
\end{equation}
where $G_b(P)$ is a nonnegative and finite quantity which do not depends on the exhaustion by extrinsic balls $\{D_t\}_{t>0}$ of $P$ and is given by
\begin{equation}
\begin{aligned}
G_b(P)&:= \lim_{t \to \infty}\left(h_b(t)\Vol(B^{b,2}_t)(\frac{(\Vol(D_t))}{\Vol(B^{b,2}_t)})' \right.\\&\left.+\int_{\partial D_t} \langle B^P(e,e),\frac{\gr^\perp r}{\Vert \gr^P r \Vert}\rangle d\sigma_t\right)
\end{aligned}
\end{equation}
\end{theoremA}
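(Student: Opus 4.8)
The plan is to integrate the Gauss--Bonnet theorem over the extrinsic balls $D_t$ and let $t\to\infty$, reading off the equality and the existence of $G_b(P)$ at the same time. By Theorem A, $P$ has finite topological type, so $\chi(D_t)=\chi(P)$ for all $t$ large, and the ratio $f(t):=\Vol(D_t)/\Vol(B^{b,2}_t)$ stays bounded. Since $P$ is minimal in a space form of curvature $b$, the Gauss equation collapses to $K^P=b-\tfrac12\Vert B^P\Vert^2$, and hence for every regular value $t$ of $r|_P$,
\[
2\pi\chi(P)=\int_{D_t}K^P\,d\sigma+\int_{\partial D_t}\kappa_g\,d\sigma_t=b\,\Vol(D_t)-\tfrac12\int_{D_t}\Vert B^P\Vert^2\,d\sigma+\int_{\partial D_t}\kappa_g\,d\sigma_t.
\]

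Next I would compute the geodesic curvature of the level set $\partial D_t$. Because the ambient space is a space form, the Hessian comparison for the restricted distance function is an exact identity --- this is the reason one obtains an equality rather than an inequality. Writing $e$ for the unit tangent to $\partial D_t$ (so $\langle\gr^P r,e\rangle=0$), it gives $\Hess^P r(e,e)=h_b(t)+\langle B^P(e,e),\gr^\perp r\rangle$, whence $\kappa_g=\Hess^P r(e,e)/\Vert\gr^P r\Vert$. Integrating over $\partial D_t$ and invoking the coarea identity $\int_{\partial D_t}\Vert\gr^P r\Vert^{-1}d\sigma_t=\frac{d}{dt}\Vol(D_t)$ yields
\[
\int_{\partial D_t}\kappa_g\,d\sigma_t=h_b(t)\,\frac{d}{dt}\Vol(D_t)+\int_{\partial D_t}\Big\langle B^P(e,e),\frac{\gr^\perp r}{\Vert\gr^P r\Vert}\Big\rangle\,d\sigma_t,
\]
the last integral being precisely the second summand inside $G_b(P)$.

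The crux is then a pure model-space computation: setting $V(t):=\Vol(B^{b,2}_t)$ one verifies directly that $b\,V(t)+h_b(t)\,V'(t)=2\pi$ for every $t$. Writing $\Vol(D_t)=f(t)V(t)$, substituting the previous two displays into Gauss--Bonnet, and collecting terms, the $f\,(bV+h_bV')$ piece becomes $2\pi f(t)$ and the remainder reorganizes as
\[
h_b(t)V(t)f'(t)+\int_{\partial D_t}\Big\langle B^P(e,e),\frac{\gr^\perp r}{\Vert\gr^P r\Vert}\Big\rangle\,d\sigma_t=2\pi\chi(P)+\tfrac12\int_{D_t}\Vert B^P\Vert^2\,d\sigma-2\pi f(t).
\]
The left-hand side is exactly the expression defining $G_b(P)$. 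As $t\to\infty$ the right-hand side converges, since $\int_{D_t}\Vert B^P\Vert^2\to\int_P\Vert B^P\Vert^2<\infty$ by monotone convergence and $f(t)\nearrow\Sup_{t>0}f(t)<\infty$ by the monotonicity of the volume ratio (the same comparison behind Theorem A). Thus the limit $G_b(P)$ exists and is finite, and rearranging the limiting identity gives exactly equation (\ref{Equality}); moreover the closed form $G_b(P)=2\pi\chi(P)+\tfrac12\int_P\Vert B^P\Vert^2\,d\sigma-2\pi\Sup_{t>0}f$ shows it depends only on exhaustion-independent data. Nonnegativity is then immediate: comparing the equality just established with the Chern--Osserman inequality of Theorem A gives $G_b(P)\geq0$.

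The step I expect to be the main obstacle is the control of the two boundary contributions in the hyperbolic regime. Individually neither $h_b(t)V(t)f'(t)$ nor the second fundamental form integral need converge: $h_b(t)V(t)$ grows exponentially, so $f'(t)$ must decay at a matching rate, and only the Gauss--Bonnet identity forces their sum to stabilize. Carrying out the exact geodesic-curvature computation, justifying the passage to regular values of $r|_P$, and establishing the monotone convergence $f(t)\to\Sup_{t>0}f$ rigorously are therefore the technical heart of the proof.
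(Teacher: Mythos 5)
Your proposal is correct and follows essentially the same route as the paper: Gauss--Bonnet on $D_t$, the exact geodesic-curvature identity $\kappa_g=\bigl(h_b(t)+\langle B^P(e,e),\gr^\perp r\rangle\bigr)/\Vert\gr^P r\Vert$ from the Hessian comparison, the Gauss equation $K^P=b-\tfrac12\Vert B^P\Vert^2$, the coarea formula, the model identity $b\Vol(B^{b,2}_t)+h_b(t)\Vol(S^{b,1}_t)=2\pi$, and passage to the limit using $\chi(D_t)=\chi(P)$ for large $t$ together with the finiteness statements of Theorem A. The closed form $G_b(P)=2\pi\chi(P)+\tfrac12\int_P\Vert B^P\Vert^2\,d\sigma-2\pi\Sup_{t>0}\frac{\Vol(D_t)}{\Vol(B^{b,2}_t)}$ and the deduction of nonnegativity from the Chern--Osserman inequality are exactly the paper's argument.
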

 
\subsection{Outline}
The outline of the paper is following.
In Section \S.2 we present the basic facts about the Hessian comparison theory of restricted distance function we are going to use, obtaining as a corollary the compactification of cmi surfaces in $\kan$ with finite total extrinsic curvature, (Corollary \ref{CorDifeo}). Section \S.3 is devoted to the unified proof of the Chern-Osserman inequality for complete minimal surfaces with finite total extrinsic curvature immersed in Euclidean and Hyperbolic spaces (Theorem \ref{ChernOss1}), and in Section \S.4 it is proved a Chern-Osserman type equality satisfied by the cmi surfaces in $\Han$ (Theorem \ref{ChernEq}).

\section{Preliminaires} \label{Prelim}  


\subsection{The extrinsic distance}
We assume
throughout the paper that $P^{2}$ is a complete,  non-compact, immersed,
$2$-dimensional submanifold in a simply connected real space form of non-positive constant sectional curvature $\kan$, ($\kan=\erre^n$ when $b=0$ and $\kan=\Han$ when $b<0$) . All the points in these manifolds are poles. Recall that a pole
is a point $o$ such that the exponential map
$$\exp_{o}\colon T_{o}N^{n} \to N^{n}$$ is a
diffeomorphism. For every $x \in N^{n}\setminus \{o\}$ we
define $r_o(x) = \dist_{N}(o, x)$, and this
distance is realized by the length of a unique
geodesic from $o$ to $x$, which is the {\it
radial geodesic from $o$}. We also denote by $r$
the restriction $r\vert_P: P\to \erre_{+} \cup
\{0\}$. This restriction is called the
{\em{extrinsic distance function}} from $o$ in
$P^m$. The gradients of $r$ in $N$ and $P$ are
denoted by $\gr^N r$ and $\gr^P r$,
respectively. Let us remark that $\gr^P r(x)$
is just the tangential component in $P$ of
$\gr^N r(x)$, for all $x\in S$. Then we have
the following basic relation:
\begin{equation}\label{radiality}
\nabla^N r = \gr^P r +(\gr^N r)^\bot ,
\end{equation}
where $(\gr^N r)^\bot(x)=\nabla^\bot r(x)$ is perpendicular to
$T_{x}P$ for all $x\in P$.

On the other hand, we should recall that all immersed surfaces $P$ in the real space forms of non-positive constant sectional curvature $N^n=\kan$ which satisfies $\int_P\Vert B^P\Vert^2 d\sigma <\infty$ are properly immersed (see \cite{A1}, \cite{MS} and \cite{O}). Therefore, we can omit the hypothesis about the properness of the immersion when we assume that $\int_P\Vert B^P\Vert^2 d\sigma <\infty$.

\begin{definition}\label{ExtBall}
Given a connected and complete
surface $P^2$ properly immersed in a manifold $N^n$ with a pole $o \in N$, we
denote
the {\em{extrinsic metric balls}} of radius $t >0$ and center $o \in N$ by
$D_t(o)$. They are defined as the intersection
$$
D_t(o)=B^N_{t}(o) \cap P =\{x\in P \colon r(x)< t\},
$$
where $B^N_{t}(o)$ denotes the open geodesic ball
of radius $R$ centered at the pole $o$ in
$N^{n}$.
\end{definition}

\begin{remark}\label{theRemk0}
We want to point out that the extrinsic domains $D_t(o)$
are precompact sets, (because we assume in the definition above that  the submanifold $P$ is properly immersed), 
with boundary $\partial D_t(o)$ being a immersed curve in $P$.  The generical smoothness of
$\partial D_{t}(o)$ follows from the following considerations: 
the distance function $r$ is smooth in $\kan \setminus \{o\}$ 
since $\kan$  to possess a pole $o\in \kan$, ($b\leq 0$). Hence
the restriction $r\vert_P$ is smooth in $P$ and consequently the
radii $t$ that produce smooth boundaries
$\partial D_{t}(o)$ are dense in $\mathbb{R}$ by
Sard's theorem and the Regular Level Set Theorem.

\end{remark}

\begin{remark}\label{theRemk1}
When the submanifold considered is totally geodesic, namely, when $P$ is a Hyperbolic or an Euclidean subespace of the ambient real space form, the extrinsic balls become geodesic balls, and its boundary is the distance sphere. We recall here that the mean curvature of the geodesic sphere in the real space form $\kan$, 'pointed inward' is (see \cite{Pa}):
$$
h_b(t)=\left\{
\begin{array}{l}
\sqrt{b}\cot\sqrt{b}t\,\,\text{  if }\,\,b>0\\
1/t\,\,\text{  if }\,\, b=0\\
\sqrt{-b}\coth\sqrt{-b}t\,\, \text{  if }\,\, b<0
\end{array}\right.
$$
\end{remark}


\subsection{Hessian comparison analysis of the extrinsic distance}

Let us consider now $D_t$ an extrinsic ball in a complete and properly  immersed minimal surface $P$ in the real space form $\kan$ with $b\leq 0$. 
We are going to apply Gauss-Bonnet formula to the curve $\partial D_t$. To do that, we need to compute its geodesic curvature in the following 
\begin{proposition}\label{geodCurv}
Given $\partial D_t$ the smooth closed curves in $P$, 

\begin{equation}\label{equ:kg}
k_g^{\partial D_t}=\frac{h_b(t)}{\Vert \gr^Pr\Vert}+\langle B^P(e,e),\frac{\gr^{\bot}r}{\Vert \gr^Pr\Vert}\rangle
\end{equation}
\end{proposition}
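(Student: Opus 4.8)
The plan is to compute the geodesic curvature of the curve $\partial D_t$ directly from the definition $k_g = \langle \nabla_e e, \nu\rangle$, where $e$ is the unit tangent to $\partial D_t$ in $P$ and $\nu$ is the inward unit conormal to $\partial D_t$ lying in $T P$. Since $\partial D_t = \{r = t\}$ is a level set of the extrinsic distance function restricted to $P$, the natural choice of conormal is $\nu = \gr^P r / \Vert \gr^P r\Vert$, and $e$ is chosen orthogonal to it in $TP$. The key structural fact I would exploit is the decomposition (\ref{radiality}): $\nabla^N r = \gr^P r + (\gr^N r)^{\bot}$, which relates the ambient radial direction to its tangential and normal parts on $P$.

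First I would relate the geodesic curvature of $\partial D_t$ inside $P$ to the covariant derivative in the ambient space $N^n = \kan$. Writing $\nabla^P$ for the Levi-Civita connection of $P$ and $\nabla^N$ for that of $N$, the Gauss formula gives $\nabla^N_e e = \nabla^P_e e + B^P(e,e)$. Therefore $k_g = \langle \nabla^P_e e, \nu\rangle = \langle \nabla^N_e e, \nu\rangle - \langle B^P(e,e), \nu\rangle$, because $\nu \in TP$ is orthogonal to the normal-valued second fundamental form. This isolates the two contributions: an ambient Hessian term and a second fundamental form term.

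Next I would evaluate $\langle \nabla^N_e e, \nu\rangle$ using the Hessian of the ambient distance function. Since $e$ is a unit tangent and $r$ is constant along $\partial D_t$, I have $0 = e(e(r)) = \Hess^N r(e,e) + \langle \nabla^N_e e, \nabla^N r\rangle$, so $\langle \nabla^N_e e, \nabla^N r\rangle = -\Hess^N r(e,e)$. I would then project $\nabla^N r$ onto the conormal direction: since $e \perp \nu$ and $e \perp (\gr^N r)^{\bot}$, only the $\gr^P r = \Vert \gr^P r\Vert\, \nu$ part contributes, giving $\langle \nabla^N_e e, \nu\rangle = -\Hess^N r(e,e)/\Vert \gr^P r\Vert$. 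The Hessian comparison theory for distance functions in the space form $\kan$ (with a pole, so $r$ is smooth and $e$ is orthogonal to the radial direction $\gr^N r$) yields $\Hess^N r(e,e) = h_b(t)\langle e, e\rangle = h_b(t)$ for a unit vector $e$ tangent to the geodesic sphere, matching the mean curvature $h_b(t)$ recorded in Remark \ref{theRemk1}. Here a small care is needed: $e$ is tangent to $P$, not to the geodesic sphere of $N$, so I must verify that $e$ still has no radial component, i.e. $\langle e, \gr^N r\rangle = 0$; this follows because $\langle e, \gr^N r\rangle = \langle e, \gr^P r\rangle = 0$ by the choice of $e \perp \nu$ within $TP$.

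Assembling the pieces gives
\begin{equation*}
k_g^{\partial D_t} = \frac{h_b(t)}{\Vert \gr^P r\Vert} - \langle B^P(e,e), \nu\rangle = \frac{h_b(t)}{\Vert \gr^P r\Vert} + \langle B^P(e,e), \frac{\gr^{\bot} r}{\Vert \gr^P r\Vert}\rangle,
\end{equation*}
where the final rewriting uses that the sign of the second fundamental form term depends on the orientation convention for $\nu$ relative to the outward normal $\gr^{\bot} r$. The main obstacle I anticipate is bookkeeping the orientation and sign conventions consistently: matching the inward-pointing conormal $\nu$ with the stated sign of $\langle B^P(e,e), \gr^{\bot} r / \Vert \gr^P r\Vert\rangle$, and confirming that the Hessian comparison gives exactly $h_b(t)$ (rather than $h_b(t)$ times $\Vert\gr^P r\Vert$ or a similar factor) once one correctly accounts for $e$ being tangent to $P$. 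The evaluation of $\Hess^N r(e,e)$ for a unit tangent $e \perp \gr^N r$ is exactly the content of the Hessian comparison theory quoted in Section \ref{Prelim}, so once the orientations are pinned down the identity follows.
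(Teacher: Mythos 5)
Your overall strategy --- computing $k_g$ via the ambient connection $\nabla^N$ and the Hessian of the ambient distance function, rather than working intrinsically with $\Hess^P r$ and quoting formula (\ref{eqHess1}) as the paper does --- is viable, but as written it contains a genuine error at exactly the point where the second fundamental form term is supposed to enter, and the final assembly covers this with an invalid identification.

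The problem is the projection step. From $0 = e(e(r)) = \Hess^N r(e,e) + \langle \nabla^N_e e, \nabla^N r\rangle$ and the decomposition $\nabla^N r = \Vert\gr^P r\Vert\,\nu + (\gr^N r)^\bot$ you get
\[
-\Hess^N r(e,e) = \Vert\gr^P r\Vert\,\langle \nabla^N_e e, \nu\rangle + \langle \nabla^N_e e, (\gr^N r)^\bot\rangle .
\]
You claim the last term vanishes ``since $e \perp (\gr^N r)^\bot$'', but orthogonality of $e$ to a vector says nothing about orthogonality of $\nabla^N_e e$ to it. In fact, by the Gauss formula $\nabla^N_e e = \nabla^P_e e + B^P(e,e)$, and since $\nabla^P_e e \in TP$ while $(\gr^N r)^\bot$ is normal to $P$, this discarded term equals $\langle B^P(e,e), \gr^\bot r\rangle$ --- it is precisely the second fundamental form contribution in (\ref{equ:kg}), and dropping it would leave you with $k_g = h_b(t)/\Vert\gr^P r\Vert$ only. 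Conversely, the term you retain as the source of the $B^P$ contribution, namely $\langle B^P(e,e),\nu\rangle$ coming from applying the Gauss formula to $\langle\nabla^N_e e,\nu\rangle$, is identically zero (as you yourself note, $\nu\in TP$ and $B^P(e,e)\perp TP$), and it cannot be ``rewritten'' as $\langle B^P(e,e), \gr^\bot r/\Vert\gr^P r\Vert\rangle$ by an orientation convention: $\nu$ is tangent to $P$ while $\gr^\bot r$ is normal to $P$, so they are not related by a choice of sign. The two mistakes compensate numerically, which is why you land on the correct formula, but the derivation is not sound. Once the mislocated term is restored the computation does close up, and it then becomes essentially equivalent to the paper's argument, which evaluates $\Hess^P r(e,e) = -\langle\nabla^P_e e,\nabla^P r\rangle$ and invokes the identity (\ref{eqHess1}), $\Hess^P r(X,X) = h_b(r)\left(1-\langle X,\nabla^{\kan}r\rangle^2\right) + \langle\nabla^{\kan}r, B^P(X,X)\rangle$, whose $B^P$ term is exactly the normal-component contribution you dropped.
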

\begin{proof}
Let $\{e, \nu\} \subset TP$ be an orthonormal frame along the curve $\partial D_t$, where $e$ is the unit tangent vector to $\partial D_t$ and $\nu=\frac{\gr^P r}{\Vert \gr^P \Vert}$ is the unit normal to $\partial D_t$ in $P$, pointed outward.

From the definition of geodesic curvature of the
extrinsic boundaries $\partial D_{t}$, we have
\begin{equation}
k_{g}^{t}=-\langle\nabla_{e}^{P}e,\frac{\nabla^{P}r}{\Vert\nabla^{P}r\Vert}\rangle
\end{equation}
Then, having on account the definition of Hessian
\[
Hess^{P}r(e,e)=\langle \nabla^{P}\nabla^{P}r,e\rangle
\]
and the fact that $\nabla^{P}r$ and $e$ are orthogonal,
\begin{equation}
k_{g}^{t}=\frac{1}{\Vert\nabla^{P}r\Vert}Hess^{P}r(e,e)
\end{equation}

But, given $X\in
T_{q}P$ unitary, (see \cite{HP} and \cite{Pa3} for detailed computations):
\begin{equation}  \label{eqHess1}
\operatorname{Hess}^P(r)(X,X)\, = \, h_b(r)\left(\, 1-  \langle\, X,\nabla^{\kan} r\,\rangle^{2} \, \right) +  \langle \, \nabla^{\kan}r, \, B^P(X,X) \, \rangle \,  
\end{equation}
where $B^P$ is the second fundamental form of $P$ in $N$. 
\bigskip
Applying at this point equation (\ref{eqHess1}):

\begin{equation}
k_{g}^{t}=\frac{1}{\Vert\nabla^{P}r\Vert}\{h_{b}(r)+\langle \nabla
^{\perp}r,B^{P}(e,e)\rangle\}
\end{equation}
\end{proof}

Now, we consider $\{D_t\}_{t>0}$ an exhaustion of $P$ by extrinsic balls. Recall than an exhaustion of the submanifold $P$ is a sequence of subsets $\{D_t\subseteq P\}_{t>0}$ such that:
\begin{itemize}
\item $D_t \subseteq D_s$ when $s\geq t$
\item $\cup_{t>0} D_t=P$
\end{itemize}

Using the equality (\ref{equ:kg}) for the geodesic curvature of the extrinsic curves we have the following result

\begin{theorem}\label{CorDifeo}
Let $P^2$ be an complete minimal surface immersed in a simply connected real space form with constant sectional curvature $b \leq 0$, $\kan$. Let us suppose that $\int_P\Vert B^P \Vert^2 d\sigma<\infty$. Then 

(i) $P$ is diffeomorphic to a compact surface $P^*$ punctured at a finite number of points. 
\medskip

(ii) For all sufficiently large $t>R_0>0$, $\chi(P)=\chi(D_t)$ and hence, given $\{D_t\}_{t>0}$ an exhaustion of $P$ by extrinsic balls, 
$$ \chi(P)= \lim_{t \to \infty} \chi(D_t)$$

\end{theorem}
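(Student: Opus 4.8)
The plan is to reduce the statement to a Morse-theoretic analysis of the extrinsic distance $r|_P$, the guiding idea being that finite total extrinsic curvature renders $r|_P$ free of critical points outside a compact set, so that $P$ is modelled far out by finitely many cylindrical ends. First I would fix a regular value $R_0$ (these exist densely by Remark \ref{theRemk0}) and examine the critical points of $r|_P$, i.e. the points $p\in P$ where $\gr^P r(p)=0$, equivalently where $\nabla^{\kan} r$ is normal to $P$. At such a point relation (\ref{radiality}) gives $\langle X,\nabla^{\kan}r\rangle=\langle X,\gr^P r\rangle=0$ for every $X\in T_pP$, so formula (\ref{eqHess1}) collapses, for unit $X$, to
\[ \Hess^P r(X,X)=h_b(r(p))+\langle \nabla^{\kan} r, B^P(X,X)\rangle \geq h_b(r(p))-\Vert B^P(p)\Vert . \]
Since $h_b(r)>0$ for $b\leq 0$ (Remark \ref{theRemk1}), this shows that every such critical point is a strict, hence isolated, local minimum (Morse index $0$) as soon as $\Vert B^P\Vert<h_b(r)$ holds there.

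The analytic heart of the argument, and the step I expect to be the main obstacle, is to secure precisely this inequality at infinity, namely $\Vert B^P\Vert=o(h_b(r))$ as $r\to\infty$. When $b<0$ one has $h_b(r)\geq\sqrt{-b}>0$, so it suffices that $\Vert B^P(x)\Vert\to 0$ as $r(x)\to\infty$; when $b=0$, where $h_b(r)=1/r$, one needs the sharper decay $r(x)\Vert B^P(x)\Vert\to 0$. Both should follow from the hypothesis $\int_P\Vert B^P\Vert^2 d\sigma<\infty$ together with the properness of the immersion it guarantees (see \cite{A1}, \cite{MS}, \cite{O}): the standard monotonicity and $\varepsilon$-regularity mechanism for minimal submanifolds upgrades the smallness of $\int_{P\setminus D_s}\Vert B^P\Vert^2$, for $s$ large, into a pointwise bound on $\Vert B^P\Vert$ far out, with the stated rates. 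Granting this, there is a regular value $R_0>0$ such that every critical point $p$ of $r|_P$ with $r(p)>R_0$ is a strict local minimum.

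It then remains to promote \emph{only minima far out} to \emph{no critical points far out}, and here I would invoke the connectedness of $P$. By Morse theory, as $t$ increases the sublevel sets $\overline{D_t}$ change only by attaching a handle at each critical point: an index-$0$ point creates a new connected component, whereas components can be merged only at index-$1$ points. Since for $t>R_0$ only index-$0$ critical points occur, the number of connected components of $\overline{D_t}$ is nondecreasing in $t$; as it must converge to $b_0(P)=1$, it already equals $1$ for every $t>R_0$, and no new minimum can be born. Hence $r|_P$ has no critical point whatsoever with $r>R_0$. Consequently the gradient flow of $r$ trivialises the region $\{r\geq R_0\}$, yielding a diffeomorphism $\{r\geq R_0\}\cong \partial D_{R_0}\times[R_0,\infty)$; since $\partial D_{R_0}$ is a finite disjoint union of circles (Remark \ref{theRemk0}), $P$ is the union of the compact piece $\overline{D_{R_0}}$ with finitely many cylindrical ends, i.e. a compact surface $P^{*}$ punctured at finitely many points, which proves $(i)$. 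For $(ii)$, the absence of critical values in $(R_0,\infty)$ makes all $D_t$ with $t>R_0$ diffeomorphic and makes $P$ deformation retract onto $\overline{D_{R_0}}$ by collapsing the ends; therefore $\chi(P)=\chi(\overline{D_{R_0}})=\chi(D_t)$ for every regular $t>R_0$, and the limit formula $\chi(P)=\lim_{t\to\infty}\chi(D_t)$ follows.
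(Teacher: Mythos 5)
Your proposal follows essentially the same route as the paper: both rest on the Hessian comparison (\ref{eqHess1}), on the decay of $\Vert B^P\Vert$ at infinity imported from \cite{A1} and \cite{O}, and on the gradient-flow/Milnor argument once the extrinsic distance is known to have no critical points outside a compact set. The one place where you genuinely add something is the passage from ``$\Hess^P r$ is positive definite at every critical point with $r>R_0$'' to ``there are no critical points with $r>R_0$'': the paper deduces the latter from the positivity of $\Vert\gr^P r\Vert\, k_g^{\partial D_t}$, but that product equals $\Hess^P r(e,e)$, which does not vanish at a zero of $\gr^P r$, so its positivity alone does not force $\Vert\gr^P r\Vert>0$; your component-counting Morse argument (index-$0$ points create components, components cannot merge in the absence of index-$1$ points, and $P$ is connected) supplies exactly the missing step. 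You are also right to flag that in the Euclidean case the required decay is $r\,\Vert B^P\Vert\to 0$ rather than merely $\Vert B^P\Vert\to 0$, since $h_0(r)=1/r$; this sharper rate is what Anderson's Proposition 2.2 actually provides, whereas the paper's phrase ``goes uniformly to $0$'' understates what is being used. Both versions defer the pointwise decay estimate itself to the cited literature, so your proof is complete to the same standard as the paper's, and somewhat more careful at the delicate step.
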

\begin{proof} 
 Let us consider $\{D_t\}_{t>0}$ an exhaustion of $P$ by extrinsic balls, centered at the pole $o \in \kan$. 
We apply Lemma \ref{equ:kg} to the smooth curves $\partial D_t$: As 
$$-\Vert B^P\Vert \leq \langle B^P(e,e),\gr^{\bot}r\rangle  \leq \Vert B^P\Vert $$

we have, on the points of the curve $ q \in \partial D_t$,
\begin{equation}\label{grg0}
\begin{aligned}
\Vert \gr^Pr\Vert(q) \cdot k_g^{\partial D_t}(q)&=h_b(r_o(q))+\langle B^P(e,e),\gr^{\bot}r\rangle(q)\,\\&\geq h_b(r_o(q))-\Vert B^P \Vert(q)
\end{aligned}
\end{equation}
Using now Proposition 2.2 in \cite{A1}, when $P^2$ is a cmi in $\erre^n$ or Lemma 3.1 in \cite{O},  when $P^2$ is a cmi in $\Han$, we know that $\Vert B^P \Vert(q)$ goes uniformly to $0$ as $t= r_o(q) \to \infty$. Hence,  for all the points $q \in \partial D_t$ and for sufficiently large $t$,

\begin{equation}\label{grg0}
\Vert \gr^Pr\Vert(q) \cdot k_g^{\partial D_t}(q)\ >0
\end{equation}
Hence, $\Vert \gr^Pr\Vert >0$ in $\partial D_t$, for all sufficiently large $t$. Fixing a  sufficienty large  radius $R_0$, we can conclude that the extrinsic distance $r_o$ has no critical points in $P\setminus \D_{R_0}$.

The above inequality implies that for this sufficienty large fixed radius $R_0$, there is a diffeomorphism

$$
\Phi: P\setminus \D_{R_0} \to \partial D_{R_0} \times [0,\infty[
$$

In particular, $P$ has only finitely many ends, each of finite topological type.

To proof this we apply Theorem 3.1 in \cite{Mi}, concluding that, as the extrinsic annuli $A_{R_0,R}(o)= D_R(o) \setminus D_{R_0}(o)$ contains no critical points of the extrinsic distance function $r_o: P \longrightarrow \erre^+$  because inequality (\ref{grg0}), then $D_R(o)$ is diffeomorphic to $D_{R_0}(o)$ for all $R \geq R_0$.

The above diffeomorfism implies that we can construct $P$ from $D_{R_0}$ ($R_0$ big enough) attaching annulis and that $\chi(P\setminus D_{t})=0$ when $t\geq R_0$.
Then, for all $t>R_0$,

$$
\chi(P)=\chi(D_{t}\cup (P\setminus D_{t}))=\chi(D_{t}) 
$$
\end{proof}

\section{Proof of Theorem \ref{ChernOss1}}\label{Cor}

We begin with the following results which are the common ingredient of the proof, both for the Euclidean and Hyperbolic cases :

\begin{lemma}\label{gradient}
Let $P^2\subset \kan$ be a  surface properly immersed in a real space form with curvature $b\leq 0$, let $D_t$ be an extrinsic disc in $P$ of radius $t >0$ and let $\partial D_t$ the extrinsic circle. Then: 
\begin{equation}\label{intgrad}
\int_{\partial D_t}\frac{||\nabla^\bot r||^2}{||\nabla^P r||}d\sigma_t\leq \int_{\partial D_t}\frac{1}{||\nabla^P r||}-h_b(t)\Vol(D_t)d\sigma_t
\end{equation}
\end{lemma}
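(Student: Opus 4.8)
The plan is to turn the stated inequality into a single lower bound for $\int_{\partial D_t}\|\nabla^P r\|\,d\sigma_t$, and then to obtain that bound from the divergence theorem combined with the trace of the Hessian comparison (\ref{eqHess1}). First I would use the orthogonal splitting (\ref{radiality}). Because $r$ is the restriction to $P$ of the $N$-distance from the pole $o$, the ambient gradient $\nabla^N r$ is a unit field, so $1=\|\nabla^N r\|^2=\|\nabla^P r\|^2+\|\nabla^\bot r\|^2$ and hence $\|\nabla^\bot r\|^2=1-\|\nabla^P r\|^2$. Dividing by $\|\nabla^P r\|$ rewrites the left-hand integrand as $\tfrac{1}{\|\nabla^P r\|}-\|\nabla^P r\|$, so the claim is equivalent to
\[
\int_{\partial D_t}\|\nabla^P r\|\,d\sigma_t \;\geq\; h_b(t)\,\Vol(D_t).
\]

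Next I would apply the divergence theorem to the tangential field $\nabla^P r$ on $D_t$. Exactly as in the proof of Proposition \ref{geodCurv}, the outward unit conormal of $\partial D_t$ in $P$ is $\nu=\nabla^P r/\|\nabla^P r\|$, so $\langle\nabla^P r,\nu\rangle=\|\nabla^P r\|$ and therefore
\[
\int_{\partial D_t}\|\nabla^P r\|\,d\sigma_t \;=\; \int_{D_t}\Div^P(\nabla^P r)\,d\sigma \;=\; \int_{D_t}\Delta^P r\,d\sigma .
\]

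The heart of the argument is the pointwise estimate of $\Delta^P r$. Tracing (\ref{eqHess1}) over an orthonormal frame $\{E_1,E_2\}$ of $TP$, using $\sum_i\langle E_i,\nabla^N r\rangle^2=\|\nabla^P r\|^2$ and the minimality $\sum_i B^P(E_i,E_i)=2\vec H=0$, yields $\Delta^P r=h_b(r)\bigl(2-\|\nabla^P r\|^2\bigr)$. Since $\|\nabla^P r\|\leq 1$ this gives $\Delta^P r\geq h_b(r)$; and since $h_b$ is positive and nonincreasing for $b\leq 0$, while $r\leq t$ on $D_t$, I get $\Delta^P r\geq h_b(t)$ throughout $D_t$. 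Integrating over $D_t$ produces the displayed lower bound and hence the lemma.

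I expect the only real subtlety to be administrative rather than conceptual: justifying the divergence theorem at those radii $t$ for which $\partial D_t$ is not smooth or $\|\nabla^P r\|$ vanishes on it, which I would handle by restricting to regular values of $r$ (dense by Sard, as noted in Remark \ref{theRemk0}) and passing to the limit, and checking the monotonicity of $h_b$ from the two explicit formulas recorded in Remark \ref{theRemk1}. The substantive step — the trace of the Hessian comparison together with minimality — is short and self-contained.
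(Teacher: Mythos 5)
Your proposal is correct and follows essentially the same route as the paper: rewrite the left-hand integrand via $\|\nabla^\bot r\|^2=1-\|\nabla^P r\|^2$, convert $\int_{\partial D_t}\|\nabla^P r\|\,d\sigma_t$ into $\int_{D_t}\Delta^P r\,d\sigma$ by the divergence theorem, and bound $\Delta^P r=(2-\|\nabla^P r\|^2)h_b(r)\geq h_b(r)\geq h_b(t)$ using minimality and the monotonicity of $h_b$. The only cosmetic difference is that you reorganize the chain of inequalities into a single equivalent lower bound before estimating, which changes nothing of substance.
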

\begin{proof}
Tracing equality (\ref{eqHess1}) we obtain the following expression for the Laplacian of the extrinsic distance in this context:

 \begin{equation} \label{eqLap1}
\Delta^{P}(r) \,= \, (m -\Vert \nabla^{P} r \Vert^{2})h_b(r)
 + m 
\langle \, \nabla^{N}r, \, H_{P}  \, \rangle  \quad ,
\end{equation}
where $H_{P}$ denotes the mean curvature vector
of $P$ in $N$ and $h_b(r)$ is the mean curvature of the geodesic $r$-spheres in $\kan$.
Applying divergence theorem  we have
\begin{equation}
\begin{aligned}
&\int_{\partial D_t}\frac{||\nabla^\bot r||^2}{||\nabla^P r||}d\sigma_t= \int_{\partial D_t}\frac{1}{||\nabla^P r||}d\sigma_t-\int_{\partial D_t}||\nabla^P r||d\sigma_t=\int_{\partial D_t}\frac{1}{||\nabla^P r||}d\sigma_t\\&-\int_{D_t}\Delta^P rd\sigma=\int_{\partial D_t}\frac{1}{||\nabla^P r||}d\sigma_t-\int_{D_t}(2-||\nabla^P r||^2)h_b(r)d\sigma\\&\leq \int_{\partial D_t}\frac{1}{||\nabla^P r||}d\sigma_t-
\int_{D_t}h_b(r)d\sigma\leq \int_{\partial D_t}\frac{1}{||\nabla^P r||}d\sigma_t-h_b(t)\Vol(D_t)
\end{aligned}
\end{equation}

\end{proof}

\begin{proposition}
\label{Mainth}
Let $P^2\subset \kan$ be a complete minimal surface properly immersed in a real space form with curvature $b\leq 0$, let $D_t$ be an extrinsic disc in $P$ of radius $t >0$ and let $\partial D_t$ be its boundary. Then: 
\begin{equation}
\begin{aligned}
-2\pi\chi &(D_t) +(b+\frac{f_{b,\alpha}^2(t)h_b(t)}{2 })\Vol(D_t)\\&+(h_b(t)-\frac{f_{b,\alpha}^2(t)}{2})\int_{\partial D_t} \frac{1}{\Vert \nabla^P r\Vert} d\sigma_t\leq \frac{1}{2}R(t)+\frac{1}{2 f_{b,\alpha}^2(t)}R'(t)
\end{aligned}
\end{equation}
\noindent where $R(t)=\int_{D_t}\Vert B^P\Vert^2 d\sigma$,  $\Vert B^P\Vert$ is the norm of the second fundamental form of $P$ in $\kan$, $\chi(D_t)$ is the Euler's characterisc of $D_t$ and, given  $\alpha \in ]0,2[$\, ,
$$
f_{b,\alpha}^2(t)=\alpha h_b(t)
$$

\end{proposition}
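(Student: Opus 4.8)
The plan is to read the proposition off the Gauss--Bonnet theorem applied to $D_t$, using the geodesic curvature formula (\ref{equ:kg}), the Gauss equation for a minimal surface, and Lemma \ref{gradient}; the only genuinely indefinite term, the boundary integral of the second fundamental form, will be absorbed by a weighted Young inequality whose weight is tuned precisely to $f_{b,\alpha}^2(t)$.

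First I would fix a regular value $t$ (so that $\partial D_t$ is a finite union of smooth closed curves, as in Remark \ref{theRemk0}) and apply Gauss--Bonnet to the compact surface-with-boundary $D_t$:
$$\int_{D_t}K^P\,d\sigma+\int_{\partial D_t}k_g^{\partial D_t}\,d\sigma_t=2\pi\chi(D_t),$$
with $k_g^{\partial D_t}$ measured through the outward normal $\nu=\nabla^P r/\Vert\nabla^P r\Vert$, which is exactly the convention of Proposition \ref{geodCurv}. Because $P$ is minimal in the constant-curvature ambient $\kan$, the Gauss equation collapses (using $B^P(e_1,e_1)=-B^P(e_2,e_2)$ and the definition of $\Vert B^P\Vert$) to the pointwise identity $K^P=b-\tfrac12\Vert B^P\Vert^2$, whence $\int_{D_t}K^P\,d\sigma=b\,\Vol(D_t)-\tfrac12 R(t)$. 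Substituting this together with (\ref{equ:kg}) into Gauss--Bonnet and rearranging yields the exact identity
$$-2\pi\chi(D_t)+b\,\Vol(D_t)+h_b(t)\!\int_{\partial D_t}\!\frac{d\sigma_t}{\Vert\nabla^P r\Vert}=\frac12 R(t)-\int_{\partial D_t}\Big\langle B^P(e,e),\frac{\nabla^\perp r}{\Vert\nabla^P r\Vert}\Big\rangle d\sigma_t.$$

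The core of the argument is to estimate the last boundary integral, the only term without a definite sign. By Cauchy--Schwarz it is bounded in absolute value by $\int_{\partial D_t}\Vert B^P\Vert\,\Vert\nabla^\perp r\Vert/\Vert\nabla^P r\Vert\,d\sigma_t$, and splitting $1/\Vert\nabla^P r\Vert$ symmetrically I would apply Young's inequality with a free weight $\epsilon>0$ to get
$$\int_{\partial D_t}\!\Vert B^P\Vert\frac{\Vert\nabla^\perp r\Vert}{\Vert\nabla^P r\Vert}d\sigma_t\le\frac{\epsilon}{2}\int_{\partial D_t}\!\frac{\Vert B^P\Vert^2}{\Vert\nabla^P r\Vert}d\sigma_t+\frac{1}{2\epsilon}\int_{\partial D_t}\!\frac{\Vert\nabla^\perp r\Vert^2}{\Vert\nabla^P r\Vert}d\sigma_t.$$
Here the coarea formula identifies the first integral on the right with $R'(t)$, while Lemma \ref{gradient} bounds the second by $\int_{\partial D_t}\!\Vert\nabla^P r\Vert^{-1}d\sigma_t-h_b(t)\Vol(D_t)$. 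Feeding these two facts back into the displayed identity and collecting the $\Vol(D_t)$ and $\int_{\partial D_t}\Vert\nabla^P r\Vert^{-1}d\sigma_t$ terms on the left produces
$$-2\pi\chi(D_t)+\Big(b+\frac{h_b(t)}{2\epsilon}\Big)\Vol(D_t)+\Big(h_b(t)-\frac{1}{2\epsilon}\Big)\int_{\partial D_t}\frac{d\sigma_t}{\Vert\nabla^P r\Vert}\le\frac12 R(t)+\frac{\epsilon}{2}R'(t).$$

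Finally I would choose the weight $\epsilon=1/f_{b,\alpha}^2(t)$. With this single choice all three coefficients line up simultaneously: $h_b(t)/(2\epsilon)=f_{b,\alpha}^2(t)h_b(t)/2$, $1/(2\epsilon)=f_{b,\alpha}^2(t)/2$, and $\epsilon/2=1/(2f_{b,\alpha}^2(t))$, which is precisely the claimed inequality. I expect the only delicate points to be bookkeeping rather than substance: matching the sign convention of (\ref{equ:kg}) with the orientation used in Gauss--Bonnet (a round disc in $\erre^2$, where $k_g=h_0(t)=1/t$ and $\chi=1$, is a convenient sanity check), and confirming that $f_{b,\alpha}^2(t)=\alpha h_b(t)>0$ for every $t>0$ since $h_b>0$ when $b\le0$, so the Young weight is legitimate. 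The restriction $\alpha\in\,]0,2[$ is not needed for this inequality itself---any $\epsilon>0$ works---but it is what keeps the coefficient $h_b(t)-f_{b,\alpha}^2(t)/2=h_b(t)(1-\alpha/2)$ nonnegative, which is presumably why it is imposed here for the later use in Theorem \ref{ChernOss1}.
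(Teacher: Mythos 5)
Your proposal is correct and follows essentially the same route as the paper: Gauss--Bonnet combined with the geodesic curvature formula (\ref{equ:kg}), the Gauss equation $K^P=b-\tfrac12\Vert B^P\Vert^2$ for minimal surfaces, a weighted Young (arithmetic--geometric mean) inequality on the boundary term, the co-area identification of $R'(t)$, and Lemma \ref{gradient}. The only difference is presentational: you carry a free weight $\epsilon$ and set $\epsilon=1/f_{b,\alpha}^2(t)$ at the end, while the paper plugs in $f_{b,\alpha}^2$ from the start.
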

\begin{proof}
Integrating along $\partial D_t$ equation (\ref{equ:kg}) and using Gauss-Bonnet theorem and co-area formula, (see \cite{S}), we obtain
\begin{equation}
\begin{aligned}
2&\pi\chi (D_t) -\int_{D_t}K^P d\sigma=\\&h_b(t)\int_{\partial D_t}\frac{1}{\Vert \gr^Pr\Vert}d\sigma_t+
\int_{\partial D_t}\langle B^P(e,e),\frac{\gr^{\bot}r}{\Vert \gr^Pr\Vert}\rangle d\sigma_t\end{aligned}
\end{equation}
where we denote as $K^P$ the Gauss curvature of $P$.

But , on $\partial D_t$, 

$$-\Vert B^P\Vert \frac{\Vert \gr^{\bot}r\Vert}{\Vert \gr^Pr\Vert} \leq \langle B^P(e,e),\frac{\gr^{\bot}r}{\Vert \gr^Pr\Vert}\rangle \leq \Vert B^P\Vert \frac{\Vert \gr^{\bot}r\Vert}{\Vert \gr^Pr\Vert}$$

\noindent so, as $f_{b,\alpha}(t) \geq 0\,\forall t>0$,  having into account the inequality among the arithmetic and geometric mean and applying co-area formula:

\begin{equation}\label{bigineq}
\begin{aligned}
2&\pi\chi (D_t) -\int_{D_t}K^Pd\sigma=h_b(t)\int_{\partial D_t}\frac{1}{\Vert \gr^Pr\Vert}d\sigma_t\\&+\int_{\partial D_t}\langle B^P(e,e),\frac{\gr^{\bot}r}{\Vert \gr^Pr\Vert}\rangle d\sigma_t \, \geq\, h_b(t) \int_{\partial D_t}\frac{1}{\Vert \gr^Pr\Vert}d\sigma_t\\&-\frac{1}{2}\int_{\partial D_t}\frac{\Vert B^P\Vert ^2}{f_{b,\alpha}^2(r)\Vert \gr^Pr\Vert}d\sigma_t-\frac{1}{2} \int_{\partial D_t}\frac{f_{b,\alpha}^2(r)\Vert \gr^{\bot}r\Vert ^2}{\Vert \gr^Pr\Vert}d\sigma_t\\&\geq h_b(t) \int_{\partial D_t}\frac{1}{\Vert \gr^Pr\Vert}d\sigma_t-\frac{1}{2 f_{b,\alpha}^2(t)}R'(t)-\frac{f_{b,\alpha}^2(t)}{2}\int_{\partial D_t}\frac{\Vert \gr^{\bot}r\Vert ^2}{\Vert \gr^Pr\Vert}d\sigma_t
\end{aligned}
\end{equation}

Then, using inequality (\ref{intgrad}) of Lemma  \ref{gradient}  in the last member of the inequalities (\ref{bigineq}) and  applying Gauss equation for minimal surfaces in the real space forms $\kan$, we have

\begin{equation}
\begin{aligned}
2\pi\chi &(D_t) -b\Vol(D_t)+\frac{1}{2}R(t) \geq (h_b(t)-\frac{f_{b,\alpha}^2(t)}{2}) \int_{\partial D_t}\frac{1}{\Vert \gr^Pr\Vert}d\sigma_t\\ &-\frac{1}{2 f_{b,\alpha}^2(t)}R'(t)+\frac{f_{b,\alpha}^2(t)h_b(t)}{2 }\Vol(D_t)
\end{aligned}
\end{equation}
and hence

\begin{equation}
\begin{aligned}
-2\pi\chi& (D_t) +(b+\frac{f_{b,\alpha}^2(t)h_b(t)}{2 })\Vol(D_t)\\&+(h_b(t)-\frac{f_{b,\alpha}^2(t)}{2})\int_{\partial D_t} \frac{1}{\Vert \nabla^P r\Vert} \leq \frac{1}{2}R(t)+\frac{1}{2 f_{b,\alpha}^2(t)}R'(t)
\end{aligned}
\end{equation}
\end{proof}

We are going to divide the proof in two cases: the  {\em Case I}, where the ambient space is the Hyperbolic space $\Han$, and  the  {\em Case II} where the ambient space is the Euclidean space $\erre^n$. 
\subsection*{Case I }

Let us consider $P$ (properly) immersed in $\Han$.
Let $\{D_t\}_{t>0}$ be an exhaustion of $P$ by extrinsic balls. 
Using co-area formula, we know that 
\begin{equation}\label{coareaIneq}
\frac{d}{dt} \Vol(D_t)=\int_{\partial D_t} \frac{1}{\Vert \nabla^P r\Vert}d\sigma_t
\end{equation} 

Hence, applying Proposition \ref{Mainth} we have

\begin{equation}\label{mainthcor1}
\begin{aligned}
-2&\pi\chi (D_t) +(b+\frac{f_{b,\alpha}^2(t)h_b(t)}{2 })\Vol(D_t)\\&+(h_b(t)-\frac{f_{b,\alpha}^2(t)}{2})\frac{d}{dt} \Vol(D_t)\leq \frac{1}{2}R(t)+\frac{1}{2 f_{b,\alpha}^2(t)}R'(t)
\end{aligned}
\end{equation}
 On the other hand, from \ref{coareaIneq},  $\frac{d}{dt} \Vol(D_t) \geq \Vol(\partial D_t)$. Therefore, using inequality (\ref{mainthcor1}) we obtain

\begin{equation}
\begin{aligned}
&-2\pi\chi(D_t)\\& +\Vol(D_t)\left[(b+\frac{f_{b,\alpha}^2(t)h_b(t)}{2 })+(h_b(t)-\frac{f_{b,\alpha}^2(t)}{2}) \frac{\Vol(\partial D_t)}{\Vol(D_t)}\right] \\& \leq \frac{1}{2}R(t)+\frac{1}{2 f_{b,\alpha}^2(t)}R'(t)
\end{aligned}
\end{equation}

\noindent Applying isoperimetric inequality in \cite{Pa}, (Theorem 1.1), we have

\begin{equation}
\begin{aligned}
&-2\pi\chi (D_t) \\&+\Vol(D_t)\left[(b+\frac{f_{b,\alpha}^2(t)h_b(t)}{2 })+(h_b(t)-\frac{f_{b,\alpha}^2(t)}{2}) \frac{\Vol(S_t^{b,1})}{\Vol(B_t^{b,2})}\right] \\&\leq\frac{1}{2}R(t)+\frac{1}{2 f_{b,\alpha}^2(t)}R'(t)
\end{aligned}
\end{equation}
Hence, using the fact that 
$$b\Vol(B^{b,2}_t)+h_b(t)\Vol(S^{b,1}_t=2\pi\,\,\,\forall t>0$$
we obtain, with some computations
\begin{equation}
\begin{split}
-2\pi\chi (D_t) +\frac{\Vol(D_t)}{\Vol(B_t^{b,2})}&\left[2\pi-2\pi\frac{f_{b,\alpha}^2(t)}{2 }\frac{\Vol(B^{b,2}_t)}{\Vol(S_t^{b,1})}\right] \\ \leq\frac{1}{2}R(t)+\frac{1}{2 f_{b,\alpha}^2(t)}R'(t)
\end{split}
\end{equation}
Therefore, for all $t >0$,
\begin{equation}\label{substituida}
\begin{split}
\frac{\Vol(D_t)}{\Vol(B_t^{b,2})}\left(1-\frac{\alpha h_b(t)}{2}\frac{\Vol(B_t^{b,2})}{\Vol(S_t^{b,1})}\right)&-\chi(D_t)\\\leq \frac{R(t)}{4\pi}+\frac{R'(t)}{4\pi \alpha h_b(t)}
\end{split}
\end{equation}
As $ \frac{||B^P||^2}{h_b(t)} \leq \frac{1}{\sqrt{-b}}||B^P||^2$, then $\int_P ||B^P||^2 d\sigma < \infty$ implies $\int_P \frac{||B^P||^2}{h_b(t)}d\sigma<\infty$. Hence, by co-area formula:

 \begin{equation}
 \int_0^\infty \left( \int_{\partial D_t}\frac{||B^P||^2}{||\gr^P r|| h_b(r)} \right)dt=\int_0^\infty \left(\frac{R'(t)}{h_b(t)}\right)dt<\infty
 \end{equation}
 
Therefore,  there is a monotone increasing (sub)sequence $\{t_i\}_{i=1}^\infty$ tending to infinity, (namely, $t_i \to \infty$ when $i \to \infty$), such that $\frac{R'(t_i)}{h_b(t_i)}\rightarrow 0$ when $i\to \infty$. 

Let us consider the exhaustion of $P$ by these extrinsic balls, namely, $\{D_{t_i}\}_{i=1}^\infty$. Then
we have, replacing $t$ for $t_i$ and taking limits when $i \to \infty$ in inequality (\ref{substituida}) and applying Theorem \ref{CorDifeo} (ii),

\begin{equation}\label{Limsub}
\begin{split}
\Sup_{i}&\frac{\Vol(D_{t_i})}{\Vol(B_{t_i}^{b,2})}\left(1-\frac{\alpha }{2}\right)-\chi(P)\\ &\leq \lim_{i \to \infty}\frac{R(t_i)}{4\pi}=\frac{1}{4\pi}\int_P\Vert B^P\Vert^2 d\sigma < \infty
\end{split}
\end{equation}
 for all $\alpha$ such that $0<\alpha<2$.
 
Hence, as $\frac{\Vol(D_{t})}{\Vol(B_{t}^{b,2})}$ is a continuous non decreasing function of $t$,  we can conclude that $\Sup_{t>0}\frac{\Vol(D_t)}{\Vol(B_t^{b,2})} < \infty$ and $-\chi(P)<\infty$.

Then, letting $\alpha$ tend to $0$ in (\ref{Limsub}), we get, for all $t>0$:
\begin{equation}
\begin{aligned}
\Sup_{t>0}\frac{\Vol(D_t)}{\Vol(B_t^{b,2})}-\chi(P)\leq \frac{\int_P \Vert B^P \Vert ^2}{4\pi} 
\end{aligned} 
\end{equation}

\subsection*{Case II}

Let us consider $P$ immersed in $\erre^n$.
We consider, as in the proof above, an exhaustion of $P$ by extrinsic balls, $\{D_t\}_{t>0}$, but now, and following \cite{A1}, these extrinsic balls will be centered at the origin $0 \in \erre^n$, which we assume, without loss of generality, that belongs to the surface $P$.  Applying Proposition \ref{Mainth} we have

\begin{equation}\label{eqmain}
\begin{aligned}
-2\pi\chi &(D_t) +(\frac{\alpha}{2 t^2})\Vol(D_t)\\&+(\frac{1}{t}-\frac{\alpha}{2t})\int_{\partial D_t} \frac{1}{\Vert \nabla^P r\Vert} \leq \frac{1}{2}R(t)+\frac{t}{2\alpha}R'(t)
\end{aligned}
\end{equation}

Now, as  $\int_P ||B^P||^2 d\sigma < \infty$, we can apply  Proposition 2.2 in \cite{A1}, so we have, for $\alpha \in ]0,2[$, 
\begin{equation}
\frac{t}{2\alpha}R'(t)=\frac{t}{2\alpha}\int_{\partial D_t} \frac{\Vert B^P\Vert^2}{\Vert\nabla^Pr\Vert}d\sigma \leq\frac{\mu(t)}{2\alpha t} \int_{\partial D_t} \frac{1}{\Vert\nabla^Pr\Vert}d\sigma
\end{equation}
being $\mu(t)$ such that $\lim_{t \to \infty} \mu(t)=0$
and therefore, from (\ref{eqmain}),

\begin{equation}\label{eqAfterAnderson}
\begin{aligned}
&-2\pi\chi (D_t) +\Vol(D_t)(\frac{\alpha}{2t^2})\\&+(\frac{1}{t}-\frac{\alpha}{2t}-\frac{\mu(t)}{2\alpha t})\int_{\partial D_t} \frac{1}{\Vert\nabla^P r\Vert} d\sigma_t\, \leq  \,\frac{1}{2}R(t)
\end{aligned}
\end{equation}

On the other hand, $ \frac{1}{t}-\frac{\alpha}{2t}-\frac{\mu(t)}{2\alpha t} \geq 0$ if and only if $\mu(t) \leq \alpha(2-\alpha)$, which it is true for $t$ big enough, namely, for $t > t_\alpha$ because $\lim_{t \to \infty} \mu(t)=0$. Hence, as $\Vol(\partial D_t) \leq \int_{\partial D_t} \frac{1}{\Vert\nabla^P r\Vert} d\sigma_t$, and applying Theorem 1.1 in \cite{Pa}, we have that inequality (\ref{eqAfterAnderson}) becomes, for all $t > t_\alpha$

\begin{equation}\label{m}
\begin{aligned}
&-2\pi\chi (D_t) \\&+\frac{\Vol(D_t)}{\Vol(B_t^{0,2})}\left[2\pi(1-\frac{\alpha}{2}-\frac{\mu(t)}{2\alpha})+  \frac{\pi\alpha}{2} \right] \leq\, \frac{1}{2}R(t)
\end{aligned}
\end{equation}

Then, taking limits when $t \to \infty$ in inequality (\ref{m}) and applying Theorem \ref{CorDifeo}, we have that $\lim_{t \to \infty}\mu(t)=0$ and $\chi(P)=\lim_{t \to \infty}\chi(D_{t})$, so we obtain, for all $\alpha$ such that $0<\alpha<2$:
\begin{equation}\label{i}
\begin{aligned}
2\pi \Sup_t\frac{\Vol(D_{t})}{\Vol(B_{t}^{0,2})}&\left(1-\frac{\alpha}{2}+\frac{\pi\alpha}{2}\right)\\&-2\pi\chi(P)\leq \frac{\int_P \Vert B^P \Vert ^2}{2}<\infty
\end{aligned}
\end{equation}
Therefore we obtain $\Sup_{t>0}\frac{\Vol(D_t)}{\Vol(B_t^{0,2})} < \infty$ and $-\chi(P)<\infty$.

Then, letting $\alpha$ tend to $0$ we obtain, for all $t>0$:
\begin{equation}
\begin{aligned}
\Sup_{t>0}\frac{\Vol(D_t)}{\Vol(B_t^{0,2})}-\chi(P)\leq \frac{\int_P \Vert B^P \Vert ^2}{4\pi}
\end{aligned} 
\end{equation}

\section{Proof of Theorem \ref{ChernEq}}

In Corollary \ref{CorDifeo}, it was obtained a  sufficienty large  radius $R_0$, such that the extrinsic distance $r_p$ has no critical points in $P\setminus \D_{R_0}$.

Hence for this sufficienty large fixed radius $R_0$, there is a diffeomorphism

$$
\Phi: P\setminus \D_{R_0} \to \partial D_{R_0} \times [0,\infty[
$$

so, in particular, $P$ has only finitely many ends, each of finite topological type.

The above diffeomorfism implied that we could construct $P$ from $D_{R_0}$ ($R_0$ big enough) attaching annulis and that $\chi(P\setminus D_{t})=0$ when $t\geq R_0$, and hence for all $t>R_0$, 
$\chi(P)=\chi(D_{t})$.

Let us consider now an exhaustion by extrinsic balls $\{D_t\}_{t>0}$ of $P$ such that the extrinsic distance $r_o$ has no critical points in $P\setminus \D_{R_0}$.

Applying now Gauss-Bonnet Theorem to the extrinsic balls $D_{t}$

\begin{equation}
2\pi\chi(P)=\int_{D_t}K^Pd\sigma +\int_{\partial D_t}k_gd\sigma_t
\end{equation}

Having in to account equation (\ref{equ:kg}) and the Gauss formula, we have, for all sufficiently large radius $t > R_0$

\begin{equation}\label{P}
\begin{aligned}
2\pi\chi(P)&=-\frac{1}{2}\int_{D_t}\Vert B^P\Vert^2+b \Vol(D_t)+ h_b(t) \left(\Vol(D_t)\right)'\\&+\int_{\partial D_t} \langle B^P(e,e),\frac{\gr^\perp r}{\Vert \gr^P r \Vert} \rangle d\sigma_t
=-\frac{1}{2}\int_{D_t}\Vert B^P\Vert^2d\sigma\\&
+\frac{\Vol(D_t)}{\Vol(B_t^{b,2})}\left(b\cdot \Vol(B_t^{b,2})+h_b(t)(\Vol(D_t))' \frac{\Vol(B_t^{b,2})}{\Vol(D_t)}\right.\\&\left.+\frac{\Vol(B_t^{b,2})}{\Vol(D_t)}\int_{\partial D_t} \langle B^P(e,e),\frac{\gr^\perp r}{\Vert \gr^P r \Vert}\rangle d\sigma_t\right)
\end{aligned}
\end{equation}

But $2\pi= b\cdot \Vol(B_t^{b,2})+ h_b(t)\Vol(S^{b,1}_t)\,\,\,\forall t>0$, so, for all sufficiently large radius $t > R_0$ and after some computations:

\begin{equation}\label{A}
\begin{aligned}
2\pi\chi(P)=-\frac{1}{2}\int_{D_t}\Vert B^P\Vert^2d\sigma&+2\pi \frac{\Vol(D_t)}{\Vol(B_t^{b,2})}+h_b(t)\Vol(B^{b,2}_t)(\frac{(\Vol(D_t))}{\Vol(B^{b,2}_t)})' \\&+\int_{\partial D_t} <B^P(e,e),\frac{\gr^\perp r}{\Vert \gr^P r \Vert}>d\sigma_t
\end{aligned}
\end{equation}

The above equation is valid for all $t>R_0$, so, taking limits when $t \to \infty$, we can define
\begin{equation}
\begin{aligned}
G_b(P)&:= \lim_{t \to \infty}\left(h_b(t)\Vol(B^{b,2}_t)(\frac{(\Vol(D_t))}{\Vol(B^{b,2}_t)})' \right.\\&\left.+\int_{\partial D_t} <B^P(e,e),\frac{\gr^\perp r}{\Vert \gr^P r \Vert}>d\sigma_t)\right)
\end{aligned}
\end{equation}

Using equalities (\ref{A}), we have that
\begin{equation}
G_b(P)=2\pi\chi(P)+\frac{1}{2}\int_{D_t}\Vert B^P\Vert^2d\sigma-2\pi\Sup_{t}\frac{\Vol(D_t)}{\Vol(B_t^{b,2})} <\infty
\end{equation}
and hence, $G_b(P)$ do not depends on the exhaustion $\{D_{t}\}_{t>0}$.


\begin{thebibliography}{AAAAA}
\bibitem[1]{A1}
M. T. Anderson, \textit{The compactification of a minimal submanifold in Euclidean space by the Gauss map} , I.H.E.S. Preprint, 1984



\bibitem[2]{ChO}
S.S. Chern and R. Osserman \textit{Complete minimal surfaces in euclidean space} (Academic Press, New York, 1984).

\bibitem[3]{ChOss}
S.S. Chern and R. Osserman, \textit{Complete minimal surface in $E^n$}, J. d'Analyse Math. \textbf{19} (1967), 15-34.


\bibitem[4]{Ch2}  Chen Qing,  \textit{On the area growth of minimal surfaces in $\mathbb{H}^n$}, Geometriae Dedicata, \textbf{75}
(1999), 263--273.

\bibitem[5]{Che3}
 Chen Qing and Cheng Yi, \textit{Chern-osserman inequality for minimal surfaces in $\mathbb{H}^n$}, Proc. Amer. Math Soc., Vol. 128, \textbf{8}, (1999), 2445-2450.




\bibitem[6]{GW}
R. Greene and S. Wu\, \textit{Function theory on manifolds which posses a pole}, Lecture Notes in Math.,\textbf{699}, (1979), Springer Verlag, Berlin.





\bibitem[7]{HP} A. Hurtado and V. Palmer, \textit{A note on the p-parabolicity of submanifolds}, Pot. Analysis \textbf{34}, (2), (2011),101--118.

\bibitem[8]{JM} L.P. Jorge and W. H. Meeks, \textit{The topology of minimal surfaces of finite total Gaussian curvature}, Topology,  \textbf{122}, (1983), 203-221.



\bibitem[9]{Mi} J. Milnor, \textit{Morse theory}, Princeton University Press, New Jersey, 1969.






\bibitem[10]{MS} S. Muller and V. Sverak, \textit{On surfaces of finite total curvature}, J. Differential Geometry, \textbf{42}, 2, (1995), 229-257.

\bibitem[11]{O} G. De Oliveira, \textit{Compactification of minimal submanifolds of hyperbolic space}, Comm. An. and Geom.,  \textbf{1} (1993), 1-29.


\bibitem[12]{Pa}
V. Palmer, \textit{Isoperimetric Inequalities for extrinsic balls in minimal 
submanifolds and their applications}, J. London Math. Soc. (2) \textbf{60} (1999), 607-616.

\bibitem[13]{Pa3} V. Palmer, \textit{On deciding whether a submanifold is
parabolic of hyperbolic using its mean curvature } Simon Stevin Transactions on Geometry, vol 1. 131-159, Simon Stevin Institute for Geometry, Tilburg, The netherlands, 2010.

\bibitem[14]{S}
T. Sakai, \textit{Riemannian Geometry}, Translations of Mathematical Monographs, vol. 149, A.M.S.1996.









\end{thebibliography}
\end{document}